\date{}
\begin{document}

\centerline{}

\centerline{}

\centerline {\Large{\bf On the Interval $[n,2n]$:}}

\centerline{}

\centerline{\Large{\bf Primes, Composites and Perfect Powers}}

\centerline{}

\newcommand{\mvec}[1]{\mbox{\bfseries\itshape #1}}

\centerline{\bf {Germ\'an Andr\'es Paz}}

\centerline{}

\centerline{Instituto de Educaci\'on Superior N$^\circ$28 Olga Cossettini,}
\centerline{(2000) Rosario, Santa Fe, Argentina}
\centerline{E-mail: germanpaz\_ar@hotmail.com}

\centerline{}

\newtheorem{Theorem}{\quad Theorem}[section]

\newtheorem{Definition}[Theorem]{\quad Definition}

\newtheorem{Corollary}[Theorem]{\quad Corollary}

\newtheorem{Lemma}[Theorem]{\quad Lemma}

\newtheorem{Example}[Theorem]{\quad Example}

\newtheorem{Remark}[Theorem]{\quad Remark}

\newtheorem{Proposition}[Theorem]{\quad Proposition}
\centerline{}

\centerline{\bf Abstract}
{\emph{In this paper we show that for every positive integer $n$ there exists a prime number in the interval $[n,9(n+3)/8]$. Based on this result, we prove that if $a$ is an integer greater than
1, then for every integer $n > 14.4a$ there are at least four
prime numbers $p$, $q$, $r$, and $s$ such that $n < ap
< 3n/2 < aq < 2n$ and $n < r < 3n/2 <
s < 2n$. Moreover, we also prove that if $m$ is a positive integer, then for
every positive integer $n > 14.4/{{{{\left( {\left|\sqrt[m]{{1.5}}\right| - 1}
\right)}^m}}}$ there exist a positive integer $a$ and a prime number
$s$ such that $n < {a^m} < 3n/2 < s < 2n$, as well as the fact that for every positive integer $n > 14.4/{{{{\left( {\left|\sqrt[m]{2}\right| -
\left|\sqrt[m]{{1.5}}\right|} \right)}^m}}}$ there exist a prime number $r$ and a
positive integer $a$ such that $n < r < 3n/2 < {a^m} < 2n$.}}

{\bf Keywords:} \emph{Bertrand-Chebyshev theorem, composite numbers, intervals, perfect powers, prime numbers}

\section{Introduction}
\label{intro}

The well-known Bertrand's postulate, which was first proved by P. L. Chebyshev in 1850, states that for every integer $n>3$ there exists a prime number $p$ such that $n<p<2n-2$. Another formulation of this theorem, though weaker, is that for every integer $n>1$ there exists a prime number $p$ such that $n<p<2n$. Nowadays, these results are known as \emph{Bertrand-Chebyshev theorem} or \emph{Chebyshev's theorem}.

Generalizations of Bertrand-Chebyshev theorem and better results have been obtained by using both elementary and nonelementary methods. In 1998, Pierre Dusart \cite{Dusart} proved that for every $n \ge 3275$ there exists a prime in the interval $[n,(1+1/(2 \ln^2 n))n]$. He improved this result in 2010 \cite{Dusart2}, when he proved that for every $n \ge 396738$ there exists a prime in the interval $[n,(1+1/(25 \ln^2 n))n]$. In 2006, M. El Bachraoui \cite{M. El Bachraoui} had proved that there exists a prime in the interval $[2n,3n]$, and in 2011 Andy Loo \cite{Loo} proved that there exists a prime in the interval $[3n,4n]$. In Sect. \ref{sec:7} we will see that Theorem \ref{Theorem3} implies these results obtained by M. El Bachraoui and Andy Loo.

In 2009, Carlos Giraldo Ospina (Lic. Matem\'aticas, USC, Cali, Colombia) proposed a proof of Polignac's conjecture in his web article entitled \emph{Primos Gemelos, Demostraci\'on Kmelliza}. In some of his web articles, C. Giraldo Ospina uses a so-called \emph{Breusch's interval} to propose proofs of general mathematical conjectures. This author assumes that Breusch's interval $[n,9(n+3)/8]$ contains at least one prime number for every positive integer $n$.

The purpose of this work is to prove that there exists a prime in the interval $[n,9(n+3)/8]$ for every positive integer $n$ and to use this result to study the interval $[n,2n]$, not only as regards prime numbers, but also as regards composite numbers and perfect powers in the mentioned interval. We will start by giving a proof that $\pi [n,9(n+3)/8] \ge 1\text{, }\forall{n}\in{\mathbb{Z}^+}$ (see Sects. \ref{sec:1}--\ref{sec:6}). The interval $[n,9(n+3)/8]$, which is easier to manipulate than other intervals, is later used in this paper to give proofs of the main results, which have already been stated in the abstract of this work. Please see Sects. \ref{sec:7}, \ref{sec:10}, and \ref{sec:11} for proofs of Theorems \ref{Theorem4}, \ref{Theorem9}, and \ref{Theorem12} respectively.

\section{Dusart's Interval}
\label{sec:1}
%
\begin{Definition}
\label{definition1}
In 1998, Pierre Dusart \emph{\cite{Dusart}} proved that for every $n \ge 3275$ there is at least one prime number in the interval
\begin{equation*}
\label{equation1}
\left[ {n,\left( {1+\frac{1}{2 \ln^2 n}} \right)n} \right]\text{,}
\end{equation*}
where $\ln$ denotes the \emph{natural logarithm}. The mentioned interval will be referred to as \emph{Dusart's interval}.
\end{Definition}
Since $\ln^2 n\equiv(\ln n)^2$, it follows that Dusart's interval can also be expressed as
\begin{align*}
\left[ {n,\left( {1+\frac{1}{2 (\ln n)^2}} \right)n} \right]\text{.}
\end{align*}
%
\section{Breusch's Interval}

\begin{Definition}
The interval $[n,9(n+3)/8]$, where $n$ is a positive integer, is what we will call \emph{Breusch's interval}.
\end{Definition}
In Sect. \ref{section4} we are going to prove that there is at least one prime number in Breusch's interval for every positive integer $n$. This statement may have been proved a long time ago; however, we will need to provide a clear proof that it is true.

\section{Breusch's Interval Containing at least One Dusart's Interval}
\label{section4}

\begin{Lemma}
\label{lemma111}
It is easy to verify that the value of $2(\ln n)^2$ increases as the value of $n$ increases, which means that the value of $1/(2(\ln n)^2)$ decreases as $n$ increases. All this implies that the value of $1+1/(2(\ln n)^2)$ decreases as the value of $n$ increases.

If we consider $n=e$, where $e$ is the base of the natural logarithm, we have
\begin{align*}
1+\frac{1}{2(\ln n)^2}=1+\frac{1}{2(\ln e)^2}=1+\frac{1}{2}=1.5\text{.}
\end{align*}
Now, if we consider $n=e^2$, we have
\begin{align*}
1+\frac{1}{2(\ln n)^2}=1+\frac{1}{2(\ln e^2)^2}=1+\frac{1}{2 \cdot 2^2}=1+\frac{1}{8}=1.125\text{.}
\end{align*}
\end{Lemma}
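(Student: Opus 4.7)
The plan is to handle the two parts of the lemma separately: first the monotonicity claim, then the two specific numerical evaluations, both of which are essentially direct computations.

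For the monotonicity, I would argue by composing standard monotonicity facts. On the domain $n>1$ (where $\ln n>0$), the natural logarithm is strictly increasing, so $n\mapsto \ln n$ is positive and increasing; squaring a positive increasing function gives an increasing function, hence $(\ln n)^2$ is strictly increasing on $(1,\infty)$, and multiplying by the positive constant $2$ preserves this, giving that $2(\ln n)^2$ is strictly increasing. Since $x\mapsto 1/x$ is strictly decreasing on the positive reals, $1/(2(\ln n)^2)$ is therefore strictly decreasing in $n$ on $(1,\infty)$, and adding the constant $1$ preserves monotonicity, yielding the claim that $1+1/(2(\ln n)^2)$ strictly decreases as $n$ grows. (One could equivalently differentiate to obtain $-\ln n/(n(\ln n)^4)<0$ for $n>1$, but the compositional argument is cleaner.)

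For the two concrete values I would just substitute. Using $\ln e=1$,
\begin{align*}
1+\frac{1}{2(\ln e)^2}=1+\frac{1}{2\cdot 1^2}=1+\frac{1}{2}=\frac{3}{2}=1.5.
\end{align*}
Using $\ln e^2=2\ln e=2$,
\begin{align*}
1+\frac{1}{2(\ln e^2)^2}=1+\frac{1}{2\cdot 2^2}=1+\frac{1}{8}=\frac{9}{8}=1.125.
\end{align*}

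There is no real obstacle here; the only minor point of care is that the monotonicity needs the restriction $n>1$, since at $n=1$ one has $\ln n=0$ and the expression $1/(2(\ln n)^2)$ is undefined, while for $0<n<1$ the factor $(\ln n)^2$ would still be positive but decreasing in $n$. Since the lemma is used later only with $n\geq e$, restricting to $n>1$ (or indeed $n\geq e$) is harmless.
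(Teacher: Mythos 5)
Your proposal is correct and follows essentially the same route as the paper, which simply asserts the monotonicity of $2(\ln n)^2$ (and hence of $1/(2(\ln n)^2)$ and $1+1/(2(\ln n)^2)$) and then substitutes $n=e$ and $n=e^2$ directly. Your explicit restriction to $n>1$ is a small but worthwhile refinement that the paper glosses over, and it is indeed harmless since the lemma is only applied for $n\ge e^2$.
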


\begin{Lemma}
\label{lemma222}
Breusch's interval $[n,9(n+3)/8]$ can also be expressed as $[n,1.125n+3.375]$, and of course for every positive integer $n$ we have
\begin{align*}
1.125n<1.125n+3.375\text{.}
\end{align*}
\end{Lemma}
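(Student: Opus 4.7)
The plan is to verify both claims by direct arithmetic, since the lemma is essentially a restatement of the definition of Breusch's interval in decimal form together with a trivial inequality.

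First I would expand the upper endpoint of Breusch's interval by distributing the $9/8$ factor, writing $9(n+3)/8 = 9n/8 + 9 \cdot 3/8 = 9n/8 + 27/8$. Then I would convert each of the two fractions to its decimal equivalent: $9/8 = 1.125$ and $27/8 = 3.375$. Substituting these values back yields $9(n+3)/8 = 1.125n + 3.375$, which establishes the claimed equivalence $[n, 9(n+3)/8] = [n, 1.125n + 3.375]$.

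For the second assertion, I would simply observe that $3.375$ is a fixed positive real number. Adding a positive quantity to $1.125n$ strictly increases it, so $1.125n < 1.125n + 3.375$ holds for every real $n$ (and in particular for every positive integer $n$), which is exactly what is claimed.

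The main obstacle here is essentially none: both parts are routine, and no step requires more than elementary arithmetic. If anything deserves a brief comment, it is only the remark that since $n \ge 1$, the quantity $1.125n + 3.375$ is automatically positive, so the interval $[n, 1.125n + 3.375]$ is nonempty and well-defined, consistent with the definition given in the previous section.
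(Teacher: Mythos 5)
Your proof is correct and matches the paper's (implicit) treatment: the paper states this lemma without proof because it amounts to the same elementary expansion $9(n+3)/8 = 9n/8 + 27/8 = 1.125n + 3.375$ and the trivial observation that adding the positive constant $3.375$ strictly increases $1.125n$. Nothing further is needed.
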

Combining Lemmas \ref{lemma111} and \ref{lemma222}, we obtain the following result, which we will express as a new lemma:

\begin{Lemma}
\label{lemma3}
For every $n \ge e^2$ we have
\begin{align*}
n<\left( {1+\frac{1}{2(\ln n)^2}} \right)n<1.125n+3.375\text{,}
\end{align*}
which means that for every integer $n>e^2$, a Breusch's interval contains at least one Dusart's interval.
\end{Lemma}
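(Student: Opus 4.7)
The plan is to verify the two strict inequalities separately and then interpret them as a containment of intervals. The left-hand inequality $n < \left(1 + 1/(2(\ln n)^2)\right)n$ is immediate: for every $n > 1$ the quantity $1/(2(\ln n)^2)$ is strictly positive, so multiplying $n$ by a factor exceeding $1$ strictly increases it. There is nothing more to do for this step.

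For the right-hand inequality, the key observation extracted from Lemma \ref{lemma111} is that the function $n \mapsto 1 + 1/(2(\ln n)^2)$ is strictly decreasing on $(1,\infty)$ and takes the value $1.125$ precisely at $n = e^2$. Consequently, for every real $n \ge e^2$ we have $1 + 1/(2(\ln n)^2) \le 1.125$, with equality only at the endpoint $n = e^2$. Multiplying through by $n > 0$ yields
\begin{align*}
\left(1 + \frac{1}{2(\ln n)^2}\right)n \;\le\; 1.125\,n.
\end{align*}
Combining this with Lemma \ref{lemma222}, which asserts $1.125\,n < 1.125\,n + 3.375$ for every positive integer $n$, I obtain the chain
\begin{align*}
\left(1 + \frac{1}{2(\ln n)^2}\right)n \;\le\; 1.125\,n \;<\; 1.125\,n + 3.375 \;=\; \frac{9(n+3)}{8},
\end{align*}
and the strictness furnished by Lemma \ref{lemma222} propagates to the whole chain, giving the required strict bound.

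To conclude, I would remark that the displayed inequality says exactly that the upper endpoint of Dusart's interval $[n,(1+1/(2(\ln n)^2))n]$ lies strictly below the upper endpoint $9(n+3)/8$ of Breusch's interval, while both intervals share the lower endpoint $n$; hence Dusart's interval sits inside Breusch's interval for every integer $n > e^2$. There is no real obstacle here: the whole argument is a monotonicity check plus the harmless additive slack $3.375$ supplied by Lemma \ref{lemma222}. The only subtlety is ensuring strictness at the boundary $n = e^2$, where the first inequality is not strict; this is absorbed by the strict inequality in the second step.
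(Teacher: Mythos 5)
Your proposal is correct and follows essentially the same route as the paper, which obtains the lemma by combining the monotonicity and boundary value $1+1/(2(\ln e^2)^2)=1.125$ from Lemma \ref{lemma111} with the additive slack $1.125n<1.125n+3.375$ from Lemma \ref{lemma222}. Your explicit handling of the non-strict inequality at $n=e^2$ is a small but welcome clarification that the paper leaves implicit.
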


As we already know, Dusart's interval contains at least one prime number for every $n \ge 3275$ (see Definition \ref{definition1}). If we combine this true statement with Lemma \ref{lemma3}, we conclude that for every integer $n \ge 3275$ a Breusch's interval contains at least one Dusart's interval containing at least one prime number. This means that Breusch's interval $[n,9(n+3)/8]$ contains at least one prime number for every integer $n \ge 3275$. With the help of computer software it can also be proved that there is a prime number in Breusch's interval for every integer $n$ such that $1 \le n \le 3274$:


\begin{longtable}{|c|c|c|}
\hline
   {$n$} & Example of prime in the interval $[n,9(n+3)/8]$ &          $9(n+3)/8$ \\
\hline \hline
\endhead
\multicolumn{2}{c}{Continued on next page...}
\endfoot
\endlastfoot
\hline
   {\bf 1} &          2 &  {\bf 4.5} \\
\hline
   {\bf 2} &          2 & {\bf 5.625} \\
\hline
   {\bf 3} &          3 & {\bf 6.75} \\
\hline
   {\bf 4} &          5 & {\bf 7.875} \\
\hline
   {\bf 5} &          5 &    {\bf 9} \\
\hline
   {\bf 6} &          7 & {\bf 10.125} \\
\hline
   {\bf 7} &          7 & {\bf 11.25} \\
\hline
   {\bf 8} &         11 & {\bf 12.375} \\
\hline
   {\bf 9} &         11 & {\bf 13.5} \\
\hline
  {\bf 10} &         11 & {\bf 14.625} \\
\hline
  {\bf ...} &         ... & {\bf ...} \\
\hline
{\bf 3265} &       3271 & {\bf 3676.5} \\
\hline
{\bf 3266} &       3271 & {\bf 3677.625} \\
\hline
{\bf 3267} &       3271 & {\bf 3678.75} \\
\hline
{\bf 3268} &       3271 & {\bf 3679.875} \\
\hline
{\bf 3269} &       3271 & {\bf 3681} \\
\hline
{\bf 3270} &       3271 & {\bf 3682.125} \\
\hline
{\bf 3271} &       3271 & {\bf 3683.25} \\
\hline
{\bf 3272} &       3299 & {\bf 3684.375} \\
\hline
{\bf 3273} &       3299 & {\bf 3685.5} \\
\hline
{\bf 3274} &       3299 & {\bf 3686.625} \\
\hline
\end{longtable}


\section{Breusch's Interval Conclusion}
\label{sec:6}

We have proved that Breusch's interval $[n,9(n+3)/8]$ contains at least one prime number for every positive integer $n$.
\begin{align*}
\boxed{
\pi \left[ {n,\frac{{9\left( {n + 3} \right)}}{8}} \right] \ge 1\text{, }\forall{n}\in{\mathbb{Z}^+}}
\end{align*}

\section{Theorems \ref{Theorem2}, \ref{Theorem3}, and \ref{Theorem4}}
\label{sec:7}

In Sect. \ref{section4} we have proved that for every positive integer $n$ there is always a
prime number in Breusch's interval $[n,9(n+3)/8]$. In this section, we are going to use this result to find out two things:
\begin{itemize}
	\item the value that a positive integer $n$ can have so that there always exists a prime number $p$ such that $n/a < p < 3n/2a$, and
	\item the value that a positive integer $n$ can have so that there always exists a prime number $q$ such that $3n/2a<q<2n/a$.
\end{itemize}

\begin{Remark}
In this paper, whenever we say that a number $b$ is
between a number $a$ and a number $c$, it means that
$a < b < c$, which means that $b$ is never equal to $a$ or
$c$ (the same rule is applied to intervals). Moreover, the number
$n$ that we use in this paper is always a positive integer.
\end{Remark}

\subsection{Breusch's interval between $n/a$ and $3n/2a$}
\label{subsec:1}

Let us suppose that $a$ is a positive integer. We need to know the value that $n$ can have so that there exists a prime number $p$ such that $n/a<p<3n/2a$. If
\begin{equation*}
\frac{n}{a} < p < \frac{{3n}}{{2a}}\text{,}
\end{equation*}
then
\begin{equation*}
n<ap<\frac{3n}{2}\text{.}
\end{equation*}
In order to achieve our goal, we need to determine the value that $n$ can have so that there exists at least one Breusch's interval (and thus a prime
number) between $n/a$ and $3n/2a$.

The integer immediately following the number $n/a$ will be denoted by
$n/a+d$. In other words, $n/a+d$ is the smallest integer
that is greater than $n/a$:
\begin{itemize}
	\item If $n/a$ is an integer, then $n/a+d=n/a+1$,
because in this case we have $d=1$.
	\item If $n/a$ is not an integer, then in the expression $n/a+d$ we
have $0<d<1$. We do not have any way of knowing the exact value of
$d$ if we do not know the value of $n/a$ first.
\end{itemize}
The reason why we work in this way is because the number $n$ that we use in Breusch's interval is always a positive integer. Now, let us make the calculation. We have
\begin{align*}
\frac{{9\left( {\frac{n}{a} + d + 3} \right)}}{8} &< \frac{{3n}}{{2a}}\\
9\left( {\frac{n}{a} + d + 3} \right) &< 8 \times \frac{{3n}}{{2a}}\\
9\left( {\frac{n}{a} + d + 3} \right) &< \frac{{24n}}{{2a}}\\
9\left( {\frac{n}{a} + d + 3} \right) &< \frac{{12n}}{a}\\
\frac{{9n}}{a} + 9d + 27 &< \frac{{12n}}{a}\\
9d + 27 &< \frac{{12n}}{a} - \frac{{9n}}{a}\\
9d + 27 &< \frac{{3n}}{a}\text{.}\\
\intertext{We need to pick the largest possible value of $d$, which is $d=1$ (if $9 \times 1 + 27 < 3n/a$, then $9d + 27 < 3n/a$ for all
$d$ such that $0 < d \le 1$):}
9 \times 1 + 27 &< \frac{{3n}}{a}\\
9 + 27 &< \frac{{3n}}{a}\\
36 &< \frac{{3n}}{a}\\
\frac{{3n}}{a} &> 36\\
\frac{n}{a} &> \frac{{36}}{3}\\
\frac{n}{a} &> 12\\
n&>12a\text{.}
\end{align*}
So, if $n>12a$, then there is at least one prime number $p$ such that
$n/a<p<3n/2a$. This means that if $n>12a$, then there
exists a prime number $p$ such that
\begin{equation*}
n < ap < \frac{{3n}}{2}.
\end{equation*}
We will state this result as a lemma:

\begin{Lemma}
\label{lemma1}
If $a$ is a positive integer, then for every integer $n>12a$ there exists at least one prime number $p$ such that $n<ap<3n/2$.
\end{Lemma}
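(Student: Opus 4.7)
The plan is to leverage the result established in Section~\ref{sec:6}: every Breusch's interval $[m,9(m+3)/8]$ with $m\in\mathbb{Z}^+$ contains at least one prime. To produce a prime $p$ with $n/a<p<3n/2a$ (equivalently, $n<ap<3n/2$), it suffices to exhibit a positive integer $m$ such that the entire Breusch's interval $[m,9(m+3)/8]$ is contained in the open interval $(n/a,3n/2a)$, for then the prime that Section~\ref{sec:6} guarantees inside $[m,9(m+3)/8]$ automatically lies between $n/a$ and $3n/2a$.

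The natural choice is to take $m$ to be the smallest integer strictly greater than $n/a$, and to write $m=n/a+d$, where $d=1$ when $n/a$ is itself an integer and $d\in(0,1)$ otherwise; in all cases $0<d\le 1$. By construction the left endpoint $m$ already exceeds $n/a$, so the only remaining requirement is on the right endpoint, namely $9(n/a+d+3)/8<3n/2a$.

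Next I would perform the elementary algebraic manipulation to put this in the form $9d+27<3n/a$. Because the left-hand side is monotonically increasing in $d$ while the right-hand side is independent of $d$, it suffices to verify the inequality in the worst case $d=1$, which reduces to $36<3n/a$, i.e., $n>12a$. This matches the hypothesis of the lemma exactly, so the chosen $m$ does the job whenever $n>12a$, and the prime that Section~\ref{sec:6} produces inside the corresponding Breusch's interval yields the desired $p$.

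The only subtlety I foresee is the justification of reducing to the extreme case $d=1$, and the handling of the two sub-cases (integer versus non-integer $n/a$) in the definition of $d$; both are straightforward but should be stated explicitly so that the bound $n>12a$ is seen to be sufficient uniformly over all admissible $d$. No deep ingredient beyond the earlier Breusch's interval result is required.
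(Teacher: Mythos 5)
Your proposal is correct and follows essentially the same route as the paper: take $m=n/a+d$ to be the smallest integer exceeding $n/a$, force the right endpoint $9(m+3)/8$ below $3n/2a$, reduce to $9d+27<3n/a$, and settle the worst case $d=1$ to obtain $n>12a$. The paper's Subsection~\ref{subsec:1} carries out exactly this computation, so no further comparison is needed.
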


\subsection{Breusch's interval between $3n/2a$ and $2n/a$}
\label{subsec:2}

Now we need to calculate the value that a positive integer $n$ can have so that there is
at least one Breusch's interval (and thus a prime number) between
$3n/2a$ and $2n/a$. The integer immediately following the
number $3n/2a$ will be denoted by $3n/2a+d$. In other words, $3n/2a+d$ is the smallest integer that is
greater than $3n/2a$. Let us take into account that $d = 1$ or $0 < d
< 1$ depending on whether $3n/2a$ is an integer or not respectively. Let us make the calculation. We have
\begin{align*}
\frac{{9\left( {\frac{{3n}}{{2a}} + d + 3} \right)}}{8} &< \frac{{2n}}{a}\\
9\left( {\frac{{3n}}{{2a}} + d + 3} \right) &< 8 \times \frac{{2n}}{a}\\
9\left( {\frac{{3n}}{{2a}} + d + 3} \right) &< \frac{{16n}}{a}\\
\frac{{27n}}{{2a}} + 9d + 27 &< \frac{{16n}}{a}\\
9d + 27 &< \frac{{16n}}{a} - \frac{{27n}}{{2a}}\\
9d + 27 &< \frac{{2 \times 16n - 27n}}{{2a}}\\
9d + 27 &< \frac{{32n - 27n}}{{2a}}\\
9d + 27 &< \frac{{5n}}{{2a}}\text{.}\\
\intertext{We need to pick the largest possible value of $d$, which is $d=1$
(if $9 \times 1 + 27 < 5n/2a$, then $9d + 27 < 5n/2a$ for
all $d$ such that $0 < d \le 1$):}
9 \times 1 + 27 &< \frac{{5n}}{{2a}}\\
9 + 27 &< \frac{{5n}}{{2a}}\\
36 &< \frac{{5n}}{{2a}}\\
\frac{{5n}}{{2a}} &> 36\\
5n &> 36 \times 2a\\
5n &> 72a\\
n &> \frac{{72a}}{5}\\
n &> 14.4a\text{.}
\end{align*}
So, if $n > 14.4a$, then there is at least one prime number $q$ such that
$3n/2a<q<2n/a$. This means that if $n>14.4a$, then
there exists a prime number $q$ such that
\begin{equation*}
\frac{{3n}}{2} < aq < 2n\text{.}
\end{equation*}
We will state this result as a lemma:

\begin{Lemma}
\label{lemma2}
If $a$ is a positive integer, then for every integer $n>14.4a$ there exists at least one prime number $q$ such that $3n/2<aq<2n$.
\end{Lemma}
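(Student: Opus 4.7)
The plan is to mimic the approach used in Subsection \ref{subsec:1} for Lemma \ref{lemma1}, now applied to the upper sub-interval $(3n/(2a), 2n/a)$. Since every Breusch interval $[m, 9(m+3)/8]$ contains a prime (Section \ref{sec:6}), it suffices to exhibit a positive integer $m$ whose Breusch interval sits strictly inside $(3n/(2a), 2n/a)$; the prime it contains will then be the desired $q$, and multiplying the resulting inequality by the positive integer $a$ yields $3n/2 < aq < 2n$.

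First I would take $m$ to be the smallest integer strictly greater than $3n/(2a)$ and write $m = 3n/(2a) + d$, where $d = 1$ when $3n/(2a)$ is itself an integer and $0 < d < 1$ otherwise; in either case $0 < d \le 1$. This choice automatically supplies the left endpoint condition $m > 3n/(2a)$. Then I would impose the right endpoint condition $9(m+3)/8 < 2n/a$, substitute the formula for $m$, clear the factor of $8$, and collect the two occurrences of $n/a$ on one side. After these routine manipulations the inequality reduces to the linear condition $9d + 27 < 5n/(2a)$.

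Because the left side is strictly increasing in $d$, it is enough to verify this at the worst case $d = 1$, which gives $36 < 5n/(2a)$, i.e.\ $n > 14.4\,a$. Under this hypothesis the Breusch interval based at $m$ fits strictly inside $(3n/(2a), 2n/a)$, so the prime $q$ it contains satisfies $3n/(2a) < q < 2n/a$, and multiplication by $a$ delivers the lemma.

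The only step needing real care is the case analysis on whether $3n/(2a)$ is an integer: overlooking the possibility $d = 1$ and only treating $d < 1$ would produce a strictly weaker bound than $14.4\,a$. Once one records that $d$ can attain the value $1$ and that $9d + 27$ is monotone in $d$, the remaining algebra is mechanical and exactly parallel to the derivation of Lemma \ref{lemma1}.
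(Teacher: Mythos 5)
Your proposal is correct and follows essentially the same route as the paper: it places the smallest integer $m > 3n/(2a)$ at the base of a Breusch interval, forces $9(m+3)/8 < 2n/a$, reduces to $9d+27 < 5n/(2a)$ with the worst case $d=1$, and obtains $n > 14.4a$. The only (harmless) cosmetic difference is that you make explicit the final step of locating the prime inside the nested Breusch interval and multiplying by $a$, which the paper states more briefly.
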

In Lemma \ref{lemma1} we have $n>12a$, while in Lemma \ref{lemma2} we have $n>14.4a$. Now, if $n>14.4a$, then $n>12a$. In other words,
\begin{equation}
\label{eq71}
n>14.4a \Rightarrow n>12a\text{.}
\end{equation}

Lemmas \ref{lemma1} and \ref{lemma2}, and implication (\ref{eq71}) lead to the following theorem:
\begin{Theorem}
\label{Theorem2}
If $a$ is any positive integer, then for every positive
integer $n > 14.4a$ there is always a pair of prime numbers
$p$ and $q$ such that $n < ap < 3n/2 < aq < 2n$.
\end{Theorem}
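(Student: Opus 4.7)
The plan is to derive Theorem \ref{Theorem2} directly as a corollary of Lemmas \ref{lemma1} and \ref{lemma2}, bridged by the implication (\ref{eq71}) already noted in the text. Given the hypothesis $n > 14.4a$, I would first apply Lemma \ref{lemma2} verbatim to obtain a prime $q$ with $3n/2 < aq < 2n$. Then, by implication (\ref{eq71}), the same hypothesis yields $n > 12a$, so Lemma \ref{lemma1} furnishes a prime $p$ with $n < ap < 3n/2$.

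Chaining these two strict inequality chains at the common value $3n/2$ produces
\begin{equation*}
n < ap < \frac{3n}{2} < aq < 2n,
\end{equation*}
which is exactly the conclusion of the theorem. The primes $p$ and $q$ are automatically distinct, since they lie in the disjoint open intervals $(n/a,\,3n/(2a))$ and $(3n/(2a),\,2n/a)$ respectively.

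I do not anticipate any genuine obstacle here, since both lemmas already encapsulate the real work; each of them ultimately rests on the fact, established in Section \ref{sec:6}, that Breusch's interval always contains a prime. The only detail worth double-checking is that $3n/2$ need not itself be an integer, but this causes no issue: the strict inequalities $ap < 3n/2$ from Lemma \ref{lemma1} and $3n/2 < aq$ from Lemma \ref{lemma2} remain strict regardless, so the four-term chain is well-formed in every case.
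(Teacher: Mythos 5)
Your proposal is correct and matches the paper's own derivation exactly: the paper likewise obtains Theorem \ref{Theorem2} by combining Lemma \ref{lemma1} and Lemma \ref{lemma2} via the implication (\ref{eq71}) that $n > 14.4a$ forces $n > 12a$. Your added observations (distinctness of $p$ and $q$, and that $3n/2$ need not be an integer) are harmless refinements of the same argument.
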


From Theorem \ref{Theorem2} we deduce the following theorem:
\begin{Theorem}
\label{Theorem3}
For every integer $n > 14.4$ there always exist prime numbers $r$ and $s$ such that $n < r < 3n/2 < s <
2n$.
\end{Theorem}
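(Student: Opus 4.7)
The plan is to obtain Theorem \ref{Theorem3} as the $a=1$ specialization of Theorem \ref{Theorem2}. Since $1$ is a positive integer, Theorem \ref{Theorem2} applies with this choice, and the hypothesis $n > 14.4a$ collapses to $n > 14.4$, which is exactly the hypothesis of Theorem \ref{Theorem3}. The conclusion of Theorem \ref{Theorem2} then guarantees the existence of primes $p$ and $q$ with
\begin{equation*}
n < 1 \cdot p < \frac{3n}{2} < 1 \cdot q < 2n,
\end{equation*}
which, after renaming $r := p$ and $s := q$, is precisely the desired statement that there exist primes $r$ and $s$ such that $n < r < 3n/2 < s < 2n$.

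There is no genuine obstacle here: the substantive work has already been done in Lemmas \ref{lemma1} and \ref{lemma2}, each of which produces a Breusch interval (and hence, by Section \ref{sec:6}, a prime) inside the half-intervals $(n/a,\, 3n/2a)$ and $(3n/2a,\, 2n/a)$ respectively, and these have already been combined in Theorem \ref{Theorem2}. In particular, no new numerical verification is required, since the bound $n > 14.4$ is inherited directly from the $a=1$ case of $n > 14.4a$. The only thing worth flagging is the bookkeeping above — that setting $a=1$ removes the multiplicative factor from $ap$ and $aq$ — after which the proof is a one-line invocation of Theorem \ref{Theorem2}.
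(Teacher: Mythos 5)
Your proposal is correct and matches the paper's own derivation: the paper also obtains Theorem \ref{Theorem3} by deducing it from Theorem \ref{Theorem2} with $a=1$, so that $n>14.4a$ becomes $n>14.4$ and $ap$, $aq$ reduce to the primes $r$, $s$. Nothing further is needed.
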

If $n > 14.4a$, then there is always a pair of prime numbers $p$ and
$q$ such that
\begin{equation*}
n < ap < \frac{{3n}}{2} < aq < 2n\text{,}
\end{equation*}
according to Theorem \ref{Theorem2}. If $n > 14.4$, then there is always a pair of prime numbers $r$ and
$s$ such that
\begin{equation*}
n < r < \frac{{3n}}{2} < s < 2n\text{,}
\end{equation*}
according to Theorem \ref{Theorem3}.

Now, if $n > 14.4a$ and $a > 1$, then $n > 14.4$. As a consequence, we state
the following theorem:

\begin{Theorem}
\label{Theorem4}
If $a$ is a positive integer greater than
1, then for every positive integer $n > 14.4a$ there are always at least four
prime numbers $p$, $q$, $r$, and $s$ such that $n < ap
< 3n/2 < aq < 2n$ and simultaneously $n < r < 3n/2 <
s < 2n$.
\end{Theorem}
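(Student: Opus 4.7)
The plan is to obtain Theorem \ref{Theorem4} as an essentially immediate corollary of Theorems \ref{Theorem2} and \ref{Theorem3}, with one small verification (distinctness of the four primes) that needs to be made explicit.

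First I would record the arithmetic comparison that unlocks everything: since $a$ is a positive integer greater than $1$ we have $a\ge 2>1$, and therefore the hypothesis $n>14.4a$ immediately gives $n>14.4a>14.4$. Thus the hypothesis of Theorem \ref{Theorem2} is satisfied for this particular $a$, and the hypothesis of Theorem \ref{Theorem3} is also satisfied. This is the key observation already foreshadowed in the displayed implication (\ref{eq71}) and in the discussion preceding the theorem.

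Next I would apply the two theorems in sequence. Theorem \ref{Theorem2}, applied with the given $a$ and $n$, supplies primes $p$ and $q$ with $n<ap<3n/2<aq<2n$. Theorem \ref{Theorem3}, applied with the same $n$, supplies primes $r$ and $s$ with $n<r<3n/2<s<2n$. Concatenating the conclusions yields both chains required by the theorem.

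The only point that is not completely automatic is the assertion that $p,q,r,s$ are \emph{four} primes, i.e.\ that all four are distinct. I would argue this by locating the two pairs in disjoint intervals. From $ap<3n/2$ and $aq<2n$, together with $a\ge 2$, one obtains $p<3n/(2a)\le 3n/4<n$ and $q<2n/a\le n$, so both $p$ and $q$ lie strictly below $n$. On the other hand $r>n$ and $s>n$, so $\{p,q\}$ and $\{r,s\}$ are disjoint. Distinctness inside each pair is immediate: $p<q$ from $ap<aq$, and $r<s$ by construction. I do not expect a real obstacle here; the main thing to be careful about is invoking $a\ge 2$ (rather than merely $a>1$) when squeezing $p$ and $q$ below $n$, which is legitimate because $a$ is assumed to be a positive integer.
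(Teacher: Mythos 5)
Your proposal matches the paper's own argument: the paper likewise derives Theorem \ref{Theorem4} by noting that $n>14.4a$ together with $a>1$ forces $n>14.4$, so Theorems \ref{Theorem2} and \ref{Theorem3} apply simultaneously. Your extra verification that $p,q,r,s$ are genuinely four distinct primes (by squeezing $p$ and $q$ below $n$ using $a\ge 2$) is a welcome addition that the paper omits, but it does not change the route.
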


As we have stated at the beginning of this paper (see Sect. \ref{intro}), in 2006 M. El Bachraoui proved that there exists a prime in the interval $[2n,3n]$. Moreover, in 2011 Andy Loo proved that there is a prime in the interval $[3n,4n]$. Now, Theorem \ref{Theorem3} implies these results obtained by M. El Bachraoui and Andy Loo, since that theorem implies that for every integer $2n>14.4$ there exist prime numbers $r$ and $s$ such that $2n<r<3n<s<4n$. In the following table we also show that the interval $[2n,4n]$ contains at least two prime numbers $r$ and $s$ such that $2n<r<3n<s<4n$ for every integer $2n$ such that $2 \le 2n \le 14$:

\begin{longtable}{|c|c|c|c|c|}
\hline
   $2n$ & $r$ & $3n$ & $s$ & $4n$ \\
\hline \hline
\endhead
\multicolumn{5}{c}{Continued on next page...}
\endfoot
\endlastfoot
\hline
   {\bf 2} &          2, 3 &  {\bf 3} & 3 & {\bf 4} \\
\hline
   {\bf 4} &          5 & {\bf 6} & 7 & {\bf 8} \\
\hline
   {\bf 6} &          7 & {\bf 9} & 11 & {\bf 12} \\
\hline
   {\bf 8} &          11 & {\bf 12} & 13 & {\bf 16} \\
\hline
   {\bf 10} &        11, 13 &    {\bf 15} & 17, 19 & {\bf 20} \\
\hline
{\bf 12} &       13, 17 & {\bf 18} & 19, 23 & {\bf 24} \\
\hline
{\bf 14} &       17, 19 & {\bf 21} & 23 & {\bf 28} \\
\hline
\end{longtable}

By using the interval $[n,9(n+3)/8]$ and doing some manual calculations, we can also prove the following:

\begin{itemize}

\item There exists a prime in the interval $[4n,5n]$ for every positive integer $n \neq 2$.

It is easy to prove that this is true if we take into account that $9(4n+3)/8\le5n$ when $n\ge27/4=6.75$.
 
\item There exists a prime in the interval $[5n,6n]$ for every positive integer $n$.

It is easy to verify that this is true if we take into account that $9(5n+3)/8\le6n$ when $n\ge9$.

\item There is a prime in the interval $[6n,7n]$ for every positive integer $n \neq 4$.

This can be easily proved if we consider that $9(6n+3)/8\le7n$ when $n\ge27/2=13.5$.

\item There is a prime in the interval $[7n,8n]$ for every positive integer $n \neq 2$.

This can be easily proved if we consider that $9(7n+3)/8\le8n$ when $n\ge27$.

\end{itemize}

Of course, other intervals can be obtained by using Breusch's interval $[n,9(n+3)/8]$ or Dusart's intervals $[n,(1+1/(2 \ln^2 n))n]$ and $[n,(1+1/(25 \ln^2 n))n]$, for example $[2n,2.25n+3.375]$ and $[4n,4.5n+3.375]$.

\section{Theorems \ref{Theorem7}, \ref{Theorem8}, and \ref{Theorem9}}
\label{sec:10}

Let us suppose that $n$ is a positive integer. We need to know the value that the number $n$ can have so that there is always a perfect square $a^2$ such that $n<a^2<3n/2$.

\begin{Remark}
We say a number is a \emph{perfect square} if it is the square of an integer. In other words, a number $x$ is a perfect square if $\sqrt{x}$ is an integer. Perfect squares are also called \emph{square numbers}.
\end{Remark}

\begin{Remark}
Throughout this paper, we use the symbol $||$ to denote the \emph{absolute value} of a number or expression. For example, the absolute value of $x$ is denoted by $|x|$.
\end{Remark}

In general, if $m$ is any positive integer, we need to know the value that the number $n$ can have so that there is always a positive integer $a$ such that $n<a^m<3n/2$.

We have $n<a^m<3n/2$. This means that
\begin{equation*}
\left|\sqrt[m]{n}\right|<a<\left|\sqrt[m]{\frac{3n}{2}}\right|\text{.}
\end{equation*}

As we said before, the number $a$ is a positive integer. Now, the integer immediately following the number $|\sqrt[m]{n}|$ will be denoted by $|\sqrt[m]{n}|+d$. In other words, $|\sqrt[m]{n}|+d$ is the smallest integer that is greater than $|\sqrt[m]{n}|$:
\begin{itemize}
\item If $|\sqrt[m]{n}|$ is an integer, then $|\sqrt[m]{n}|+d=|\sqrt[m]{n}|+1$, because in this case we have $d=1$.
\item If $|\sqrt[m]{n}|$ is not an integer, then in the expression $|\sqrt[m]{n}|+d$ we have $0<d<1$. We do not have any way of knowing the exact value of $d$ if we do not know the value of $|\sqrt[m]{n}|$ first.
\end{itemize}

Now, let us make the calculation. To start with,
\begin{align*}
\left|\sqrt[{m}]{n}\right| +d &< \left|\sqrt[{m}]{\frac{3n}{2} }\right|\text{.}\\
\intertext{We need to pick the largest possible value of $d$, which is $d=1$ (if ${|\sqrt[{m}]{n}|+1<|\sqrt[{m}]{3n/2}|}$, then $|\sqrt[{m}]{n}|+d<|\sqrt[{m}]{3n/2}|$ for all $d$ such that $0<d\leq1$):}
\left|\sqrt[{m}]{n}\right| +1 &< \left|\sqrt[{m}]{\frac{3n}{2} }\right|\\
1 &< \left|\sqrt[{m}]{\frac{3n}{2} }\right| -\left|\sqrt[{m}]{n}\right|\\
1 &< \left|\sqrt[{m}]{\frac{3}{2} n}\right| -\left|\sqrt[{m}]{n}\right|\\
1 &< \left|\sqrt[{m}]{1.5n}\right| -\left|\sqrt[{m}]{n}\right|\\
1 &< \left|\sqrt[{m}]{1.5}\right| \left|\sqrt[{m}]{n}\right| -\left|\sqrt[{m}]{n}\right|\\
1 &< \left|\sqrt[{m}]{n}\right| \left(\left|\sqrt[{m}]{1.5}\right| -1\right)\\
\left|\sqrt[{m}]{n}\right| &> \frac{1}{\left|\sqrt[{m}]{1.5}\right| -1}\\
n &> \left(\frac{1}{\left|\sqrt[{m}]{1.5}\right| -1} \right)^{m}\\
n &> \frac{1^{m} }{\left(\left|\sqrt[{m}]{1.5}\right| -1\right)^{m} }\\
n &> \frac{1}{\left(\left|\sqrt[{m}]{1.5}\right| -1\right)^{m} }\text{.}
\end{align*}
This means that if $m$ is a positive integer, then for every positive integer $n>1/{\left(\left|\sqrt[{m}]{1.5}\right| -1\right)^{m}}$ there is at least one positive integer $a$ such that ${n<a^{m} <3n/2}$. Now, if $n>14.4/{\left(\left|\sqrt[{m}]{1.5}\right| -1\right)^{m} } $, then $n>1/{\left(\left|\sqrt[{m}]{1.5}\right| -1\right)^{m} }$. As a consequence, we state the following theorem:

\begin{Theorem}
\label{Theorem7}
If $m$ is a positive integer, then for every positive integer $n>14.4/(|\sqrt[m]{1.5}|-1)^m$ there is at least one positive integer $a$ such that $n<a^m<3n/2$.
\end{Theorem}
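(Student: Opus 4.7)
The plan is to mimic the argument from Subsection \ref{subsec:1}, but with the ``integer in an arithmetic interval'' question replaced by an ``integer in a radical interval'' question. The condition $n < a^m < 3n/2$ on a positive integer $a$ is equivalent to $|\sqrt[m]{n}| < a < |\sqrt[m]{3n/2}|$, so the whole task reduces to finding how large $n$ must be in order to guarantee that the open interval $\bigl(|\sqrt[m]{n}|,\,|\sqrt[m]{3n/2}|\bigr)$ contains an integer.

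First I would let $|\sqrt[m]{n}| + d$ denote the smallest integer strictly greater than $|\sqrt[m]{n}|$, so that $0 < d \le 1$ (with $d = 1$ exactly when $|\sqrt[m]{n}|$ is itself an integer), exactly paralleling the $d$-trick used for Breusch's interval. It then suffices to enforce $|\sqrt[m]{n}| + d < |\sqrt[m]{3n/2}|$, and picking the worst-case value $d = 1$ collapses this to the single sufficient inequality $|\sqrt[m]{n}| + 1 < |\sqrt[m]{3n/2}|$.

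Next I would use the multiplicative identity $|\sqrt[m]{3n/2}| = |\sqrt[m]{1.5}|\,|\sqrt[m]{n}|$ to rewrite the inequality as $1 < |\sqrt[m]{n}|\bigl(|\sqrt[m]{1.5}| - 1\bigr)$, solve for $|\sqrt[m]{n}|$, and raise both sides to the $m$-th power to obtain the clean threshold $n > 1/\bigl(|\sqrt[m]{1.5}| - 1\bigr)^m$. Since $14.4/\bigl(|\sqrt[m]{1.5}| - 1\bigr)^m$ is strictly larger than this threshold, the hypothesis of Theorem \ref{Theorem7} is more than enough and the conclusion follows.

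I do not expect any real obstacle; the calculation is mechanical, and the one mildly delicate point (the two cases in which $|\sqrt[m]{n}|$ is or is not itself an integer) is neatly absorbed by the ``take the largest admissible $d$'' device already used earlier in the paper. The only mildly curious feature is the extra factor of $14.4$ in the stated bound, which is not needed for this theorem alone; I would interpret it as deliberate overkill, chosen so that Theorem \ref{Theorem7} can later be combined with Lemma \ref{lemma2} under a single uniform hypothesis on $n$, yielding a prime and a perfect $m$-th power in the desired halves of $[n, 2n]$ simultaneously.
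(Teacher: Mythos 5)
Your proposal is correct and follows essentially the same route as the paper: the paper likewise rewrites $n<a^m<3n/2$ as $\left|\sqrt[m]{n}\right|<a<\left|\sqrt[m]{3n/2}\right|$, introduces the smallest integer $\left|\sqrt[m]{n}\right|+d$ exceeding $\left|\sqrt[m]{n}\right|$, takes the worst case $d=1$, and solves to get the threshold $n>1/\left(\left|\sqrt[m]{1.5}\right|-1\right)^m$, after which the stated bound with $14.4$ follows a fortiori. Your reading of the factor $14.4$ as deliberate slack for the later combination with Theorem \ref{Theorem3} (via Theorem \ref{Theorem8}, yielding Theorem \ref{Theorem9}) also matches the paper's intent.
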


Now we are going to prove the following proposition:

\begin{Proposition}
\label{proposition3}
If $m$ is any positive integer, then $1/(|\sqrt[m]{1.5}|-1)^m>1$.
\end{Proposition}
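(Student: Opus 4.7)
The plan is to rewrite the inequality $1/(|\sqrt[m]{1.5}|-1)^m > 1$ in a more tractable equivalent form and then verify it directly from elementary facts about $m$-th roots.

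First I would observe that since $1.5 > 1$ and $m$ is a positive integer, $\sqrt[m]{1.5} > 1$, so $|\sqrt[m]{1.5}|-1$ is a strictly positive real number. Dividing $1$ by a positive quantity preserves the direction of the inequality, so the proposition is equivalent to
\begin{equation*}
\bigl(|\sqrt[m]{1.5}|-1\bigr)^m < 1.
\end{equation*}
Since the base is positive, and since for any $x\in(0,1)$ and any positive integer $m$ we have $x^m<1$, it suffices to show that
\begin{equation*}
0 < |\sqrt[m]{1.5}|-1 < 1,
\end{equation*}
i.e., that $1 < \sqrt[m]{1.5} < 2$.

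The lower bound is immediate from $1.5>1$. For the upper bound, raising both sides to the $m$-th power (which preserves the inequality for positive numbers), $\sqrt[m]{1.5}<2$ is equivalent to $1.5<2^m$. This last inequality holds for every positive integer $m$, since $2^1=2>1.5$ and $2^m$ is strictly increasing in $m$.

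I do not expect any obstacle here: the statement really is just the observation that $\sqrt[m]{1.5}$ lies strictly between $1$ and $2$, so $\sqrt[m]{1.5}-1$ lies strictly between $0$ and $1$, and any positive integer power of such a number is still less than $1$. The only thing to be careful about is handling the absolute-value notation used in the paper (which, for a positive real, is just the number itself), and making sure the chain of equivalences is stated in the correct direction.
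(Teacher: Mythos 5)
Your proof is correct and follows essentially the same route as the paper: both reduce the claim to the elementary fact that $1.5 < 2^m$ for every positive integer $m$, via the observation that $\sqrt[m]{1.5}$ lies strictly between $1$ and $2$. Your version is slightly tidier in that you state the equivalences in the correct logical direction and explicitly note the positivity of $|\sqrt[m]{1.5}|-1$, whereas the paper writes the chain starting from the conclusion and appeals to a separate remark about ``working backwards.''
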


\begin{proof}
\begin{align*}
\frac{1}{\left(\left|\sqrt[{m}]{1.5}\right| -1\right)^{m} }  &> 1\\
1 &> 1\left(\left|\sqrt[{m}]{1.5}\right| -1\right)^{m}\\
1 &> \left(\left|\sqrt[{m}]{1.5}\right| -1\right)^{m}\\
\left|\sqrt[{m}]{1}\right|  &> \left|\sqrt[{m}]{1.5}\right| -1\\
1 &> \left|\sqrt[{m}]{1.5}\right| -1\\
1+1 &> \left|\sqrt[{m}]{1.5}\right|\\
2 &> \left|\sqrt[{m}]{1.5}\right|\\
2^{m}  &> 1.5
\end{align*}
It is very easy to verify that $2^m>1.5$ for every positive integer $m$. Consequently, if $m$ is any positive integer, then $1/{\left(\left|\sqrt[{m}]{1.5}\right| -1\right)^{m} } >1$.
\end{proof}

\begin{Remark}
In general, to prove that an inequality is correct, we can solve that inequality step by step. If we obtain a result that is obviously correct, then we can start with that correct result, `work backwards from there' and prove that the initial statement is true.
\end{Remark}

If ${1/{\left(\left|\sqrt[{m}]{1.5}\right| -1\right)^{m} } >1}$, then $14.4/{\left(\left|\sqrt[{m}]{1.5}\right| -1\right)^{m} } >14.4$. This means that $14.4/{\left(\left|\sqrt[{m}]{1.5}\right| -1\right)^{m} } >14.4$ for every positive integer $m$. As a result, we state the following theorem:

\begin{Theorem}
\label{Theorem8}
If $m$ is any positive integer and \mbox{$n>14.4/{{{{\left( {\left|\sqrt[m]{{1.5}}\right| - 1} \right)}^m}}}$,} then $n > 14.4$.
\end{Theorem}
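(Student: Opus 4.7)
The plan is to reduce the statement immediately to Proposition \ref{proposition3} by a one-step scaling argument. Proposition \ref{proposition3} already supplies the key inequality
\[
\frac{1}{\left(\left|\sqrt[m]{1.5}\right|-1\right)^m} > 1
\]
valid for every positive integer $m$, so essentially all the real work is done.

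First, I would multiply both sides of that inequality by $14.4$. Since $14.4 > 0$, the direction of the inequality is preserved, and we obtain
\[
\frac{14.4}{\left(\left|\sqrt[m]{1.5}\right|-1\right)^m} > 14.4.
\]
Second, I would invoke transitivity: if $n > 14.4/(|\sqrt[m]{1.5}|-1)^m$ and $14.4/(|\sqrt[m]{1.5}|-1)^m > 14.4$, then $n > 14.4$, which is exactly the conclusion of the theorem.

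There is no genuine obstacle here; the proof is a two-line corollary of Proposition \ref{proposition3}. The only point worth stating explicitly is that $14.4$ is positive, so multiplying through by it does not flip the inequality sign — a triviality, but one that should be mentioned for completeness. Everything else is immediate from transitivity of $>$ on the real numbers.
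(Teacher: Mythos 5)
Your proposal is correct and matches the paper's own argument exactly: the paper likewise multiplies the inequality of Proposition \ref{proposition3} by $14.4$ to obtain $14.4/\left(\left|\sqrt[m]{1.5}\right|-1\right)^m > 14.4$ and then concludes by transitivity. Nothing is missing.
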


In Sect. \ref{sec:7} we proved that for every positive integer $n>14.4$ there exist prime numbers $r$ and $s$ such that $n<r<3n/2<s<2n$. This true statement is called \emph{Theorem \ref{Theorem3}}. Now, the following theorem is deduced from Theorems \ref{Theorem3}, \ref{Theorem7}, and \ref{Theorem8}:

\begin{Theorem}
\label{Theorem9}
If $m$ is a positive integer, then for
every positive integer $n > 14.4/{{{{\left( {\left|\sqrt[m]{{1.5}}\right| - 1}
\right)}^m}}}$ there exist a positive integer $a$ and a prime number
$s$ such that $n < {a^m} < 3n/2 < s < 2n$.
\end{Theorem}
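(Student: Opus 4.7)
The plan is to chain together the three preceding results: Theorem \ref{Theorem3}, Theorem \ref{Theorem7}, and Theorem \ref{Theorem8}. The hypothesis to work with is $n > 14.4/(|\sqrt[m]{1.5}|-1)^m$, and the goal splits naturally into two pieces: producing an integer $a$ with $n < a^m < 3n/2$, and producing a prime $s$ with $3n/2 < s < 2n$.

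First I would invoke Theorem \ref{Theorem7} directly: since the hypothesis on $n$ is exactly the one required by that theorem, it yields a positive integer $a$ with $n < a^m < 3n/2$. This already takes care of the left half of the desired chain of inequalities, and no further work is needed there.

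Next I would use Theorem \ref{Theorem8} to upgrade the hypothesis to the cleaner bound $n > 14.4$. This is the step that bridges the two halves of the statement, and it relies on the comparison $1/(|\sqrt[m]{1.5}|-1)^m > 1$ established in Proposition \ref{proposition3}. With $n > 14.4$ in hand, Theorem \ref{Theorem3} supplies, among other primes, a prime $s$ satisfying $3n/2 < s < 2n$, which gives the right half of the chain.

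Finally I would concatenate the two strings of inequalities at the common pivot $3n/2$ to conclude $n < a^m < 3n/2 < s < 2n$, completing the proof. There is no real obstacle here since every ingredient has already been proved in the paper; the only thing one must verify carefully is that the hypothesis $n > 14.4/(|\sqrt[m]{1.5}|-1)^m$ simultaneously triggers Theorem \ref{Theorem7} (which needs this exact bound) and, via Theorem \ref{Theorem8}, also triggers Theorem \ref{Theorem3} (which only needs $n > 14.4$). So the argument is essentially a three-line deduction.
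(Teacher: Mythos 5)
Your proposal is correct and follows exactly the paper's own route: the paper likewise deduces Theorem \ref{Theorem9} by combining Theorem \ref{Theorem7} (for the integer $a$ with $n<a^m<3n/2$), Theorem \ref{Theorem8} (to pass from the hypothesis on $n$ to $n>14.4$), and Theorem \ref{Theorem3} (for the prime $s$ with $3n/2<s<2n$). Nothing is missing; your write-up is in fact more explicit than the paper's one-line deduction.
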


\section{Theorems \ref{Theorem10}, \ref{Theorem11}, and \ref{Theorem12}}
\label{sec:11}

Taking into account that $m$ is a positive integer, let us find out the value that the number $n$ can have so that there is always a positive integer $a$ such that
$3n/2 < {a^m} < 2n$.

We have
\begin{align*}
\frac{{3n}}{2} &< {a^m} < 2n\text{,}\\
\intertext{which means that}
\left|\sqrt[m]{{\frac{{3n}}{2}}}\right| &< a < \left|\sqrt[m]{{2n}}\right|\text{.}
\end{align*}
The integer immediately
following the number $|\sqrt[m]{{3n/2}}|$ will be denoted by $|\sqrt[m]{{3n/2}}| + d$. In other words, $|\sqrt[m]{{3n/2}}| +
d$ is the smallest integer that is greater than $|\sqrt[m]{{3n/2}}|$:
\begin{itemize}
	\item If $|\sqrt[m]{{3n/2}}|$ is an integer, then $|\sqrt[m]{{3n/2}}|
+ d = |\sqrt[m]{{3n/2}}| + 1$, because in this case we have $d = 1$.
	\item If $|\sqrt[m]{{3n/2}}|$ is not an integer, then in the expression
$|\sqrt[m]{{3n/2}}| + d$ we have $0 < d < 1$. We do not have any way
of knowing the exact value of $d$ if we do not know the value of
$|\sqrt[m]{{3n/2}}|$ first.
\end{itemize}

Let us make the calculation. To begin with,
\begin{align*}
\left|\sqrt[m]{{\frac{{3n}}{2}}}\right| + d  &<  \left|\sqrt[m]{{2n}}\right|\text{.}\\
\intertext{We need to pick the largest possible value of $d$, which is $d=1$
(if ${|\sqrt[m]{{3n/2}}| + 1 < |\sqrt[m]{{2n}}|}$, then
$|\sqrt[m]{{3n/2}}| + d < |\sqrt[m]{{2n}}|$ for all $d$ such that $0
< d \le 1$):}
\left|\sqrt[m]{{\frac{{3n}}{2}}}\right| + 1  &<  \left|\sqrt[m]{{2n}}\right|\\
1  &<  \left|\sqrt[m]{{2n}}\right| - \left|\sqrt[m]{{\frac{{3n}}{2}}}\right|\\
1  &<  \left|\sqrt[m]{{2n}}\right| - \left|\sqrt[m]{{\frac{3}{2}n}}\right|\\
1  &<  \left|\sqrt[m]{{2n}}\right| - \left|\sqrt[m]{{1.5n}}\right|\\
1  &<  \left|\sqrt[m]{2}\right|\left|\sqrt[m]{n}\right| - \left|\sqrt[m]{{1.5}}\right|\left|\sqrt[m]{n}\right|\\
1  &<  \left|\sqrt[m]{n}\right|\left( {\left|\sqrt[m]{2}\right| - \left|\sqrt[m]{{1.5}}\right|} \right)\\
\left|\sqrt[m]{n}\right|  &>  \frac{1}{{\left|\sqrt[m]{2}\right| - \left|\sqrt[m]{{1.5}}\right|}}\\
n  &>  {\left( {\frac{1}{{\left|\sqrt[m]{2}\right| - \left|\sqrt[m]{{1.5}}\right|}}} \right)^m}\\
n  &>  \frac{{{1^m}}}{{{{\left( {\left|\sqrt[m]{2}\right| - \left|\sqrt[m]{{1.5}}\right|} \right)}^m}}}\\
n  &>  \frac{{{1^{}}}}{{{{\left( {\left|\sqrt[m]{2}\right| - \left|\sqrt[m]{{1.5}}\right|} \right)}^m}}}\text{.}
\end{align*}
This means that if $m$ is a positive integer, then for every positive
integer $n > 1/{{{{\left( {\left|\sqrt[m]{2}\right| - \left|\sqrt[m]{{1.5}}\right|} \right)}^m}}}$
there is at least one positive integer $a$ such that \mbox{$3n/2 <
{a^m} < 2n$}. Now, if $n > 14.4/{{{{\left( {\left|\sqrt[m]{2}\right| - \left|\sqrt[m]{{1.5}}\right|}
\right)}^m}}}$, then $n > 1/{{{{\left( {\left|\sqrt[m]{2}\right| - \left|\sqrt[m]{{1.5}}\right|}
\right)}^m}}}$. As a consequence, we state the following theorem:

\begin{Theorem}
\label{Theorem10}
If $m$ is a positive integer, then for every positive
integer \mbox{$n > 14.4/{{{{\left( {\left|\sqrt[m]{2}\right| - \left|\sqrt[m]{{1.5}}\right|}
\right)}^m}}}$} there is at least one positive integer $a$ such that \mbox{$3n/2 < {a^m} < 2n$.}
\end{Theorem}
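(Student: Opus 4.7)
The plan is to mirror the argument used for Theorem \ref{Theorem7} almost verbatim, but with the bracketing interval shifted from $(n, 3n/2)$ up to $(3n/2, 2n)$. The condition $3n/2 < a^m < 2n$ for a positive integer $a$ is equivalent, by monotonicity of the $m$-th root on positive reals, to the condition $|\sqrt[m]{3n/2}| < a < |\sqrt[m]{2n}|$. So the task reduces to a one-dimensional gap problem: guarantee that the open interval $(|\sqrt[m]{3n/2}|, |\sqrt[m]{2n}|)$ contains an integer.

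Next, I would introduce the notation already used in the excerpt: write the smallest integer strictly greater than $|\sqrt[m]{3n/2}|$ as $|\sqrt[m]{3n/2}| + d$ with $0 < d \le 1$ (with $d=1$ exactly when $|\sqrt[m]{3n/2}|$ is itself an integer). The existence of the desired $a$ is then implied by $|\sqrt[m]{3n/2}| + d < |\sqrt[m]{2n}|$. Since we have no control over the exact value of $d$, the safe choice is to pick the worst case $d = 1$: if the strict inequality holds at $d=1$ it holds for every admissible $d$.

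With $d=1$ fixed, the problem becomes the purely algebraic inequality $|\sqrt[m]{2n}| - |\sqrt[m]{3n/2}| > 1$. Factoring out $|\sqrt[m]{n}|$ on the left (using $|\sqrt[m]{2n}| = |\sqrt[m]{2}|\,|\sqrt[m]{n}|$ and likewise for $1.5n$) gives
\begin{align*}
|\sqrt[m]{n}|\bigl(|\sqrt[m]{2}| - |\sqrt[m]{1.5}|\bigr) > 1,
\end{align*}
which, raising both sides to the $m$-th power after isolating $|\sqrt[m]{n}|$, is equivalent to $n > 1/(|\sqrt[m]{2}| - |\sqrt[m]{1.5}|)^m$. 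Since $14.4 > 1$ and the denominator is positive, the hypothesis $n > 14.4/(|\sqrt[m]{2}| - |\sqrt[m]{1.5}|)^m$ is strictly stronger than what is needed, and the theorem follows.

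The only point requiring any care is the positivity of the factor $|\sqrt[m]{2}| - |\sqrt[m]{1.5}|$, which ensures the division step is legal and reverses the inequality in the right direction; this is immediate because $2 > 1.5 > 0$ and $x \mapsto x^{1/m}$ is strictly increasing on $(0,\infty)$. So there is no substantive obstacle: the main work is bookkeeping with the auxiliary variable $d$, exactly as in the proof of Theorem \ref{Theorem7}, and verifying that the worst-case substitution $d = 1$ does indeed give a sufficient condition rather than a necessary one.
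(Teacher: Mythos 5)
Your proposal is correct and follows essentially the same route as the paper: reduce $3n/2 < a^m < 2n$ to the gap condition $|\sqrt[m]{3n/2}| + d < |\sqrt[m]{2n}|$, substitute the worst case $d=1$, factor out $|\sqrt[m]{n}|$, and observe that $n > 14.4/\left(\left|\sqrt[m]{2}\right|-\left|\sqrt[m]{1.5}\right|\right)^m$ is stronger than the derived sufficient condition $n > 1/\left(\left|\sqrt[m]{2}\right|-\left|\sqrt[m]{1.5}\right|\right)^m$. Your added remark on the positivity of $\left|\sqrt[m]{2}\right|-\left|\sqrt[m]{1.5}\right|$ is a small point of care the paper leaves implicit.
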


Now we are going to prove the following proposition:

\begin{Proposition}
\label{Proposition4}
If $m$ is any positive integer, then
$1/{{{{\left( {\left|\sqrt[m]{2}\right| - \left|\sqrt[m]{{1.5}}\right|} \right)}^m}}} > 1$.
\end{Proposition}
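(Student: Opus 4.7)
The plan is to mimic the author's ``work backwards'' strategy from the proof of Proposition \ref{proposition3}. Starting from the desired inequality $1/\bigl(|\sqrt[m]{2}| - |\sqrt[m]{1.5}|\bigr)^m > 1$, I would multiply through by the positive quantity $\bigl(|\sqrt[m]{2}| - |\sqrt[m]{1.5}|\bigr)^m$ (which is positive because $2 > 1.5$ forces $|\sqrt[m]{2}| > |\sqrt[m]{1.5}|$), obtaining the equivalent inequality $\bigl(|\sqrt[m]{2}| - |\sqrt[m]{1.5}|\bigr)^m < 1$. Taking $m$-th roots of both sides (valid since both sides are positive and $m$ is a positive integer) reduces this to $|\sqrt[m]{2}| - |\sqrt[m]{1.5}| < 1$, which is equivalent to
\begin{equation*}
|\sqrt[m]{2}| < 1 + |\sqrt[m]{1.5}|.
\end{equation*}

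To verify this last inequality, I would observe two elementary facts. First, $|\sqrt[m]{2}| \le 2$ for every positive integer $m$ (with equality only when $m=1$), since raising $\sqrt[m]{2}$ to the $m$-th power yields $2 \le 2^m$. Second, $|\sqrt[m]{1.5}| > 1$ for every positive integer $m$, because $1.5 > 1 = 1^m$. Combining these gives $|\sqrt[m]{2}| \le 2 < 1 + |\sqrt[m]{1.5}|$, which is the needed inequality.

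To present the proof in the same style as Proposition \ref{proposition3}, I would write the chain of equivalent inequalities top to bottom, ending at $2^m > 1.5$ or equivalently at the trivial fact that $|\sqrt[m]{1.5}| > 1$, and then invoke the author's \emph{Remark} that a correct terminal inequality justifies the initial one. I do not anticipate a real obstacle here: the only subtlety is making sure the step where we take $m$-th roots is legitimate, which requires noting that $|\sqrt[m]{2}| - |\sqrt[m]{1.5}|$ is positive so both sides remain in the domain where $x \mapsto x^{1/m}$ is monotone. Everything else is routine manipulation.
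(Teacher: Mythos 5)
Your proposal is correct and follows essentially the same route as the paper's own proof: working backwards from the claimed inequality to $|\sqrt[m]{2}| < 1 + |\sqrt[m]{1.5}|$, and then verifying this via the two observations $|\sqrt[m]{2}| \le 2$ and $|\sqrt[m]{1.5}| > 1$. Your explicit remark that $|\sqrt[m]{2}| - |\sqrt[m]{1.5}| > 0$ (needed to justify the multiplication and the root extraction) is a small point of added care that the paper leaves implicit.
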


\begin{proof}

\begin{align*}
\frac{1}{{{{\left( {\left|\sqrt[m]{2}\right| - \left|\sqrt[m]{{1.5}}\right|} \right)}^m}}}  &>  1\\
1  &>  1{\left( {\left|\sqrt[m]{2}\right| - \left|\sqrt[m]{{1.5}}\right|} \right)^m}\\
1  &>  {\left( {\left|\sqrt[m]{2}\right| - \left|\sqrt[m]{{1.5}}\right|} \right)^m}\\
\left|\sqrt[m]{1}\right|  &>  \left|\sqrt[m]{2}\right| - \left|\sqrt[m]{{1.5}}\right|\\
1  &>  \left|\sqrt[m]{2}\right| - \left|\sqrt[m]{{1.5}}\right|\\
1 + \left|\sqrt[m]{{1.5}}\right|  &>  \left|\sqrt[m]{2}\right|\\
\intertext{Now, we know that $|\sqrt[m]{{1.5}}| > 1$, since $1.5 > {1^m}$, that is to say, $1.5 > 1$. On the other hand, we have $|\sqrt[m]{2}| \le 2$, since $2 \le {2^m}$. This means that}
1 + \left|\sqrt[m]{{1.5}}\right|  &>  2  \ge  \left|\sqrt[m]{2}\right|\text{,}\\
\intertext{which proves that}
1 + \left|\sqrt[m]{{1.5}}\right|  &>  \left|\sqrt[m]{2}\right|\text{.}
\end{align*}
Therefore, if $m$ is any positive integer, then $1/{{{{\left(
{\left|\sqrt[m]{2}\right| - \left|\sqrt[m]{{1.5}}\right|} \right)}^m}}} > 1$.
\end{proof}

If $1/{{{{\left(
{\left|\sqrt[m]{2}\right| - \left|\sqrt[m]{{1.5}}\right|} \right)}^m}}} > 1$, then $14.4/{{{{\left(
{\left|\sqrt[m]{2}\right| - \left|\sqrt[m]{{1.5}}\right|} \right)}^m}}} > 14.4$. This means that
$14.4/{{{{\left( {\left|\sqrt[m]{2}\right| - \left|\sqrt[m]{{1.5}}\right|} \right)}^m}}} > 14.4$
for every positive integer $m$. As a consequence, we state the following theorem:

\begin{Theorem}
\label{Theorem11}
If $m$ is any positive integer and $n$ is a number such that $n>14.4/\left(\left|\sqrt[m]{2}\right|-\left|\sqrt[m]{1.5}\right|\right)^m$, then $n>14.4$.
\end{Theorem}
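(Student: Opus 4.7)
The plan is to reduce Theorem \ref{Theorem11} to the inequality already established in Proposition \ref{Proposition4}. That proposition asserts that for every positive integer $m$,
$$\frac{1}{\left(\left|\sqrt[m]{2}\right|-\left|\sqrt[m]{1.5}\right|\right)^m}>1.$$
The goal of Theorem \ref{Theorem11} is simply to upgrade this to the corresponding statement for the threshold $14.4/\left(\left|\sqrt[m]{2}\right|-\left|\sqrt[m]{1.5}\right|\right)^m$ and then apply transitivity, so the work is essentially bookkeeping.

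Concretely, I would proceed in three short steps. First, invoke Proposition \ref{Proposition4} to obtain $1/\left(\left|\sqrt[m]{2}\right|-\left|\sqrt[m]{1.5}\right|\right)^m>1$. Second, since $14.4$ is a positive real number, multiplying both sides of this strict inequality by $14.4$ preserves the inequality and yields
$$\frac{14.4}{\left(\left|\sqrt[m]{2}\right|-\left|\sqrt[m]{1.5}\right|\right)^m}>14.4.$$
Third, combine this with the hypothesis $n>14.4/\left(\left|\sqrt[m]{2}\right|-\left|\sqrt[m]{1.5}\right|\right)^m$ via transitivity of $>$ to conclude $n>14.4$, which is exactly the desired statement.

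There is no substantive obstacle, because Proposition \ref{Proposition4} has already absorbed the only nontrivial piece, namely the comparison $1+\left|\sqrt[m]{1.5}\right|>\left|\sqrt[m]{2}\right|$ that follows from $\left|\sqrt[m]{1.5}\right|>1$ and $\left|\sqrt[m]{2}\right|\le 2$. One mild point worth flagging in the write-up is that multiplication by the positive constant $14.4$ genuinely preserves strict inequality (so $a>b$ implies $14.4\,a>14.4\,b$ for real $a,b$); this is elementary but is the formal justification for step two. The theorem is a convenient bridge that lets Section \ref{sec:11} combine Theorem \ref{Theorem10} with Theorem \ref{Theorem3} in the same style used for Theorem \ref{Theorem9} in Section \ref{sec:10}.
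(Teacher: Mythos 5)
Your proposal is correct and follows essentially the same route as the paper: it invokes Proposition \ref{Proposition4}, multiplies the resulting inequality by the positive constant $14.4$ to get $14.4/\left(\left|\sqrt[m]{2}\right|-\left|\sqrt[m]{1.5}\right|\right)^m>14.4$, and concludes by transitivity with the hypothesis on $n$. The only difference is that you make the multiplication-by-a-positive-constant step and the final transitivity step explicit, which the paper leaves implicit.
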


The following theorem is deduced from Theorems \ref{Theorem3}, \ref{Theorem10}, and \ref{Theorem11}:

\begin{Theorem}
\label{Theorem12}
If $m$ is a positive integer, then for
every positive integer $n > 14.4/{{{{\left( {\left|\sqrt[m]{2}\right| -
\left|\sqrt[m]{{1.5}}\right|} \right)}^m}}}$ there exist a prime number $r$ and a
positive integer $a$ such that $n < r < 3n/2 < {a^m} < 2n$.
\end{Theorem}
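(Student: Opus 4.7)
The plan is to deduce Theorem \ref{Theorem12} by concatenating the two strict inequalities produced by Theorems \ref{Theorem10} and \ref{Theorem3}, using Theorem \ref{Theorem11} as the bridge that certifies the hypothesis of Theorem \ref{Theorem3} is in force.

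First, I would fix a positive integer $m$ and assume $n$ is a positive integer with
\begin{equation*}
n > \frac{14.4}{\left(\left|\sqrt[m]{2}\right| - \left|\sqrt[m]{1.5}\right|\right)^m}.
\end{equation*}
Applying Theorem \ref{Theorem10} directly to this $n$ immediately yields a positive integer $a$ with
\begin{equation*}
\frac{3n}{2} < a^m < 2n,
\end{equation*}
which supplies the right half of the chain of inequalities that Theorem \ref{Theorem12} asserts.

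Next, to produce the prime $r$ with $n < r < 3n/2$, I would invoke Theorem \ref{Theorem11}, which guarantees that under the standing hypothesis on $n$ we automatically have $n > 14.4$. With this in hand, Theorem \ref{Theorem3} applies and supplies primes $r$ and $s$ satisfying $n < r < 3n/2 < s < 2n$; of these, only $r$ is needed for the present statement.

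Finally, combining the two displayed inequalities at the common midpoint $3n/2$ gives $n < r < 3n/2 < a^m < 2n$, which is exactly the conclusion sought. There is essentially no obstacle here beyond bookkeeping: the two existence statements are independent, the threshold for $n$ in Theorem \ref{Theorem10} is the one stated in Theorem \ref{Theorem12}, and Theorem \ref{Theorem11} ensures that this threshold is strong enough to trigger Theorem \ref{Theorem3}. The only point that warrants a brief remark in the write-up is that the prime $r$ and the integer $a$ are produced by logically independent arguments, so no compatibility check between them is required beyond the shared pivot $3n/2$.
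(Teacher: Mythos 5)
Your proposal is correct and follows exactly the route the paper takes: it deduces Theorem \ref{Theorem12} from Theorems \ref{Theorem3}, \ref{Theorem10}, and \ref{Theorem11}, using Theorem \ref{Theorem10} for the integer $a$, Theorem \ref{Theorem11} to verify $n>14.4$, and Theorem \ref{Theorem3} for the prime $r$. No differences worth noting.
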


\section{Conclusion}
\label{sec:13}

We will restate the three most important theorems that have been proved in this paper (see Sects. \ref{sec:7}, \ref{sec:10}, and \ref{sec:11}):

\begin{center}
\fbox{
\parbox[c][2.5cm][c]{10cm}{\centering Theorem \ref{Theorem4}: If $a$ is a positive integer greater than
1, then for every positive integer $n > 14.4a$ there are always at least four
prime numbers $p$, $q$, $r$, and $s$ such that $n < ap
< 3n/2 < aq < 2n$ and simultaneously $n < r < 3n/2 <
s < 2n$.}
}
\end{center}
\begin{center}
\fbox{
\parbox[c][2cm][c]{10cm}{\centering Theorem \ref{Theorem9}: If $m$ is a positive integer, then for
every positive integer $n > 14.4/{{{{\left( {\left|\sqrt[m]{{1.5}}\right| - 1}
\right)}^m}}}$ there exist a positive integer $a$ and a prime number
$s$ such that $n < {a^m} < 3n/2 < s < 2n$.}
}
\end{center}
\begin{center}
\fbox{
\parbox[c][2cm][c]{10cm}{\centering Theorem \ref{Theorem12}: If $m$ is a positive integer, then for
every positive integer $n > 14.4/{{{{\left( {\left|\sqrt[m]{2}\right| -
\left|\sqrt[m]{{1.5}}\right|} \right)}^m}}}$ there exist a prime number $r$ and a
positive integer $a$ such that $n < r < 3n/2 < {a^m} < 2n$.}
}
\end{center}

\end{document}